\documentclass[a4paper,11pt,centertags,reqno,psamsfonts]{amsart}

\usepackage{letltxmacro}        
\usepackage{ifthen}		        
\usepackage[headings]{fullpage} 
\usepackage{setspace}		    
\usepackage[inline]{enumitem}   
\usepackage{array}				
\usepackage{fancybox}		    
\usepackage{graphicx}           
\usepackage{subfigure}		    
\usepackage{amsmath}	        
\usepackage{amsthm}				
\usepackage{amssymb}				
\usepackage{color}			
\usepackage{mathtools}			
\usepackage{mathrsfs}			
\usepackage{stmaryrd}			
\usepackage{yhmath}				
\usepackage{accents}			
\usepackage{extarrows}          
\usepackage{xfrac}              
\usepackage[all]{xy}			
\usepackage[round,comma,authoryear,sort&compress]{natbib} 

\usepackage{bm}

\newcommand{\N}{\mathbb{N}}
\newcommand{\Z}{\mathbb{Z}}
\newcommand{\Q}{\mathbb{Q}}
\newcommand{\R}{\mathbb{R}}



\newcommand{\C}[2]{
\ifthenelse{#1=0 \and #2=0}{\textsf{\upshape C}}
{\ifthenelse{#2=0}{\textsf{\upshape C}^{#1}}
{\textsf{\upshape C}^{#1,#2}}}
}


\renewcommand{\d}{\mathrm{d}}



\newcommand{\E}{\textsf{\upshape E}}

\renewcommand{\P}{\textsf{\upshape P}}

\newcommand{\filtration}[1]{\mathfrak{#1}}
\newcommand{\sigalgebra}[1]{\mathscr{#1}}

\makeatletter
\let\oldr@@t\r@@t
\def\r@@t#1#2{%
\setbox0=\hbox{$\oldr@@t#1{#2\,}$}\dimen0=\ht0
\advance\dimen0-0.2\ht0
\setbox2=\hbox{\vrule height\ht0 depth -\dimen0}%
{\box0\lower0.4pt\box2}}
\LetLtxMacro{\oldsqrt}{\sqrt}
\renewcommand*{\sqrt}[2][\ ]{\oldsqrt[#1]{#2}}
\makeatother

\theoremstyle{plain}
\newtheorem{theorem}{Theorem}
\newtheorem{lemma}[theorem]{Lemma}
\newtheorem{proposition}[theorem]{Proposition}

\theoremstyle{definition}

\theoremstyle{remark}
\newtheorem{remark}[theorem]{Remark}

\numberwithin{figure}{section}
\numberwithin{table}{section}

\graphicspath{{Figures/}}

\bibpunct{(}{)}{,}{a}{}{,}

\allowdisplaybreaks[4]

\begin{document}
\title{A one-dimensional diffusion hits points fast}
\author{Cameron Bruggeman}
\author{ Johannes Ruf}

\address{Cameron Bruggeman\\
Department of Mathematics\\
Columbia University}

\email{bruggeman@math.columbia.edu}

\address{Johannes Ruf\\
Department of Mathematics\\
University College London}

\email{j.ruf@ucl.ac.uk}

\subjclass[2010]{Primary: 60J60}

\keywords{}

\date{\today}

\begin{abstract}
A one-dimensional, continuous, regular, and strong Markov process $X$ with state space $E$ hits any point $z \in E$ fast with positive probability. To wit, if ${\bm \tau}_z = \inf \{t \geq 0:X_{t} = z\}$, then $\P_\xi({\bm \tau}_z<\varepsilon)>0$ for all $\xi \in E$ and $\varepsilon>0$.
\end{abstract}

\maketitle

\section{Introduction}
Consider  a measurable function $\sigma: \R \mapsto \R \setminus \{0\}$  such that $1/\sigma^2$ is locally integrable.  Then  \citet{EngelbertSchmidt1981} guarantee the existence of a filtered probability space $(\Omega, \sigalgebra{F}, \filtration{F}, \P)$, 
 equipped with a Brownian motion $W = (W_t)_{t \geq 0}$, and the existence of a stochastic process $Z = (Z_t)_{t \geq 0}$ such that
\begin{align*}
	Z_t = \int_0^t \sigma(Z_s) \d W_s\,, \qquad t \geq 0
\end{align*}
holds.  Moreover, $Z$ is strong Markov and continuous. Let now $z \in \R$, $\varepsilon>0$, and ${\bm \tau}_z^Z$ denote the first hitting time of $z$ by $Z$.  Then we know that $\P({\bm \tau}_z^Z<\infty)>0$.     \citet{Mijatovic:personal} and  \citet{Karatzas_Ruf_2013} ask whether also $\P({\bm \tau}_z^Z<\varepsilon)>0$ holds for all $\varepsilon>0$.  Only a partial answer is provided: 
 If $1/\sigma^4$ is locally integrable (everywhere, apart from countably many points), then the answer is affirmative.
  
 This note answers the question affirmatively in a  general setup. To this end, we fix an open interval $E$ of $\R$, and denote its closure by  $\overline E$. We then consider a 
  one-dimensional  Markov process $X = (X_t)_{t \geq 0}$ with state space $E$ on the filtered space $(\Omega, \sigalgebra{F}, \filtration{F})$, along with a family of probability measures $(\P_\xi)_{\xi \in \overline E}$.  We  denote the death-time of $X$ by $\bm \zeta$.  We assume  that $X$ is strong Markov, regular, continuous on $[0, \bm \zeta)$, and $\lim_{t \nearrow \bm \zeta} X_t$ exists and satisfies $\lim_{t \nearrow \bm \zeta} X_t \notin E$ on $\{\bm \zeta < \infty\}$.  We set $X_{\bm \zeta +s} =  \lim_{t \nearrow \bm \zeta} X_t \in \overline E$ for all $s \geq 0$ on $\{\bm \zeta < \infty\}$.     If $Y = (Y_t)_{t \geq 0}$ is a stochastic process and ${\bm \rho}$ a stopping time, then $Y^{{\bm \rho}} = (Y^{{\bm \rho}}_t)_{t \geq 0} = (Y_{{\bm \rho} \wedge t})_{t \geq 0}.$  Furthermore, if $Y$ is a semimartingale, we let $[Y] = ([Y]_t)_{t \geq 0}$ denote the quadratic variation process of $Y$.
 
We now define the stopping times
\begin{align*}  
{\bm \tau}_z = \inf \{t \geq 0:X_{t} = z\}, \qquad z \in \overline E.
\end{align*} 
Since $X$ is regular, we have $\P_\xi(\bm \tau_z < \infty) > 0$ for all $\xi, z \in E$.
Throughout the note we shall fix a starting point $\xi \in E$ and a target point $z \in E$. 
We are now able to state the main result of this note.
 \begin{theorem}  \label{T:1}
 	For all $\varepsilon>0$, we have $\P_\xi({\bm \tau}_z < \varepsilon)>0$.
 \end{theorem}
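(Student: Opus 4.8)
The plan is to reduce the statement to a question about the local times of a Brownian motion and then to resolve that question with the first Ray--Knight theorem. Since $X$ is a regular, continuous, strong Markov process on the interval $E$, classical one-dimensional diffusion theory furnishes a strictly increasing continuous scale function $\scale$ and a Radon speed measure $\speed$ that is strictly positive on every nonempty open subinterval; moreover, $X$ may be realised as a time change of a Brownian motion in natural scale. Concretely, I would let $B$ be a Brownian motion started at $\scale(\xi)$, write $L = (L^y_t)$ for its family of local times, set $A_t = \int_{\R} L^y_t\,\speed(\d y)$, and let $\gamma$ be the right-continuous inverse of the continuous, strictly increasing additive functional $A$. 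Then $\scale(X)$ has the same law as $(B_{\gamma_t})_{t \geq 0}$, the functional $A$ carries no killing because the hypotheses force the death time to occur only at the boundary of $E$, and the first hitting time of $z$ becomes ${\bm \tau}_z = A_{H}$, where $H = \inf\{s \geq 0 : B_s = a\}$ and $a := \scale(z)$. After a shift and, if necessary, a reflection of the scale, I may assume $\scale(\xi) = 0$ and $a > 0$.

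With this translation in hand, the theorem amounts to showing $\P(A_{H} < \varepsilon) > 0$ for every $\varepsilon > 0$. First I would localise: choose $\eta > 0$ so small that $[-\eta, a] \subset \scale(E)$, and consider the event $G = \{H < H_{-\eta}\}$ that $B$ reaches $a$ before $-\eta$, where $H_{-\eta} = \inf\{s \geq 0 : B_s = -\eta\}$. On $G$ the path stays inside the compact interval $[-\eta, a]$, so all local time is deposited there and
\begin{align*}
A_{H} = \int_{[-\eta,a]} L^y_{H}\,\speed(\d y) \leq \speed([-\eta,a]) \cdot \sup_{y} L^y_{H},
\end{align*}
where $M := \speed([-\eta,a]) < \infty$ by local finiteness of $\speed$. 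Hence it suffices to prove that, for every $\delta > 0$,
\begin{align*}
\P\Big(G \cap \big\{ \sup_{y} L^y_{H} \leq \delta \big\}\Big) > 0;
\end{align*}
taking $\delta = \varepsilon/(2M)$ then yields $A_{H} \leq M\delta < \varepsilon$ on an event of positive probability, which is the claim.

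The main obstacle is exactly this last positivity statement, and it is genuinely delicate: the supremum of the local time is not controlled by the occupation time, so neither a small-time estimate $\{H < t\}$ nor a support/tube argument around a fast straight-line crossing is enough, since wiggling at a fine spatial scale inside any tube inflates the local time. I would therefore appeal to the first Ray--Knight theorem, which identifies the law of the entire local-time profile $(L^y_{H})_y$. In these terms the process $u \mapsto L^{a-u}_{H}$ is a squared Bessel process of dimension $2$ started at $0$ on the levels $u \in [0,a]$, continued, past the spatial level $0$, as a squared Bessel process of dimension $0$ started from the junction value $L^0_{H}$ and absorbed at $0$. The event $G$ translates into this $0$-dimensional piece being absorbed before the spatial level $-\eta$ is reached, while $\{\sup_y L^y_{H} \leq \delta\}$ asks both pieces to stay below $\delta$. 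Each requirement has positive probability on its own: the $2$-dimensional piece staying below $\delta$ throughout $[0,a]$ is a two-dimensional Brownian motion remaining in a small disc for a fixed time, and a $0$-dimensional squared Bessel process started from a value at most $\delta/2$ stays below $\delta$ and is absorbed quickly with positive probability, by optional stopping applied to the nonnegative local martingale it is. Finally, because the Ray--Knight local-time process is Markov in the spatial variable, the two pieces are conditionally independent given $L^0_{H}$, so their probabilities multiply to a positive number. This establishes the displayed positivity and completes the proof.
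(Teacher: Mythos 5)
Your proposal is correct in substance, and it takes a genuinely different route from the paper. Both arguments begin by reducing to natural scale, but they then diverge completely. The paper proceeds ``softly'': it introduces the function $u_\xi(y)=1\wedge\inf\{t\geq 0:\P_\xi(\bm\tau_y\leq t)>0\}$ of \eqref{eq:u}, proves that it is continuous and satisfies the pathwise upper-Lipschitz bound $u_\xi(X_{t+s})-u_\xi(X_t)\leq s$ (Lemma~\ref{L2}), deduces via Lemma~\ref{L1} that $t\mapsto u_\xi(X_t)$ has finite first variation, and then invokes the dichotomy of Proposition~\ref{P:2} --- a function of a continuous local-martingale regular diffusion whose composition with the path has finite first variation must be constant --- to conclude $u_\xi\equiv u_\xi(\xi)=0$. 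You instead use the hard classical machinery: the It\^o--McKean realisation of the diffusion as a time change of Brownian motion through the additive functional $A_t=\int L^y_t\,\speed(\d y)$, the identity $\bm\tau_z=A_H$, and the first Ray--Knight theorem (Theorem~XI.2.2 in \citet{RY}) to show that the whole local-time profile $(L^y_H)_y$ can be kept uniformly small, on the event $G$, with positive probability. Importantly, your proof is not defeated by the paper's warning that ``an argument based purely on a change of time is incomplete'': that warning concerns the Dambis--Dubins--Schwarz representation $X=B_{[X]}$, where the dependence between $B$ and $[X]$ is uncontrolled, whereas in your argument the time change has the explicit form $\int L^y\,\speed(\d y)$, so the feared ``slowing down'' near a point is exactly what finiteness of $\speed$ on compacts plus smallness of $\sup_y L^y_H$ rules out. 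As for what each approach buys: the paper's is lighter (only It\^o--Tanaka/local-time facts for continuous local martingales, no speed-measure representation, no Ray--Knight) and produces Proposition~\ref{P:2} as a by-product of independent interest; yours is more quantitative, since it would yield an explicit lower bound for $\P_\xi(\bm\tau_z<\varepsilon)$ in terms of $\speed([-\eta,a])$ and squared-Bessel probabilities, and it is close in spirit to the Kotani--Watanabe route mentioned in the paper's introduction.

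One step of yours needs slightly more care. For the $\mathrm{BESQ}(0)$ piece you assert that, started from any value at most $\delta/2$, it ``stays below $\delta$ and is absorbed quickly with positive probability, by optional stopping.'' Optional stopping only gives $\P(T_0<T_\delta)\geq 1/2$; fast absorption requires in addition the extinction formula $\P_x(T_0\leq\eta)=\e^{-x/(2\eta)}$, and the resulting union bound $\e^{-x/(2\eta)}-x/\delta$ is positive only when the starting value $x$ is small relative to $\eta$ and $\delta$, not for all $x\leq\delta/2$. This is easily repaired: require the $\mathrm{BESQ}(2)$ piece to stay below $\min(\delta,x_0)$ for a suitable $x_0=x_0(\delta,\eta)$, an event that still has positive probability; alternatively, the additivity property of squared Bessel processes gives positivity of the joint event for every starting value $x<\delta$. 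So this is a fixable imprecision rather than a gap, and the proof as a whole stands.
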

 
 \begin{remark}
 We now provide some warnings concerning Thereom~\ref{T:1}.
 \begin{itemize}
 \item
 The continuity of $X$ is clearly important in Theorem~\ref{T:1}. For instance, the compensated Poisson process with state space $E = \R$ is strong Markov and regular, but the assertion of Theorem~\ref{T:1} does not hold for it.  
 \item If $X$ is  Brownian motion then Theorem~\ref{T:1} clearly holds.  If $X$ is only a local martingale,  the Dambis-Dumbins-Schwarz theorem yields the representation $X = B_{[X]}$ for some Brownian motion $B$ and Lemma~\ref{L:1} below yields that $[X]$ is strictly increasing.  However, $B$ and $[X]$ are usually not independent.  In particular, $[X]$ might slow down as $X$ approaches a point.  
  Thus, an argument for Theorem~\ref{T:1} that is based purely on a change of time is incomplete.
 \qed
 \end{itemize}
 \end{remark}

 After we had completed this note, Umut Cetin pointed out to us that Theorem~1 could also be derived from the arguments in Appendix II of \citet{Kotani:Watanabe:1982}.  We feel, however, that the arguments of this note are different and more direct (and cuter :-)).

\section{Proof of Theorem~\ref{T:1}}
Before proving Theorem~\ref{T:1}, we provide some auxiliary results.

\begin{lemma} \label{L1}
	Let $v: [0,\infty) \rightarrow [0, \infty)$ denote a nonnegative function with $v(0) = 0$ that satisfies $v(t+s) - v(t) \leq s$ for all $s,t \geq 0$. Then the first variation of $v|_{[0,t]}$ is bounded by $2t$, for each $t \geq 0$.
\end{lemma}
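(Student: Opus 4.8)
The plan is to work directly from the definition of first variation as a supremum over partitions and to split each partition sum into its upward and downward contributions. Fix $t \geq 0$ and a partition $0 = t_0 < t_1 < \cdots < t_n = t$. Writing $\Delta_i = v(t_i) - v(t_{i-1})$, I would set
\begin{align*}
  P = \sum_{i : \Delta_i > 0} \Delta_i, \qquad N = \sum_{i : \Delta_i < 0} (-\Delta_i),
\end{align*}
so that the variation of $v$ along this partition equals $P + N$, while the telescoping sum gives $P - N = \sum_{i=1}^n \Delta_i = v(t) - v(0) = v(t)$.

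The hypothesis $v(t+s) - v(t) \leq s$ controls only the \emph{upward} increments: applying it with the pair $(t_{i-1}, t_i)$ yields $\Delta_i \leq t_i - t_{i-1}$ for every $i$, and in particular for those $i$ with $\Delta_i > 0$. Summing over the upward indices and bounding by the full partition mesh gives
\begin{align*}
  P \leq \sum_{i : \Delta_i > 0} (t_i - t_{i-1}) \leq \sum_{i=1}^n (t_i - t_{i-1}) = t.
\end{align*}
The downward increments are not directly constrained by the hypothesis, and this is the one point that needs care; the resolution is to feed them back through the telescoping identity. Since $N = P - v(t)$ and $v(t) \geq 0$ by nonnegativity, the total variation along the partition satisfies
\begin{align*}
  P + N = 2P - v(t) \leq 2P \leq 2t.
\end{align*}

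As this bound is uniform over all partitions of $[0,t]$, taking the supremum shows that the first variation of $v|_{[0,t]}$ is at most $2t$, completing the argument. I expect the only genuinely substantive observation to be the decomposition trick: because the hypothesis is a one-sided (upper) Lipschitz bound, it alone cannot control the downward oscillations, but combining the bound $P \leq t$ with the telescoping identity and the nonnegativity of $v$ turns the uncontrolled downward part into something dominated by the controlled upward part, which is what makes the total variation finite and yields the clean constant $2t$.
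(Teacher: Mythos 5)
Your proof is correct and is essentially the same argument as the paper's, which also bounds the total upward movement by $t$ via the upper-Lipschitz hypothesis and then uses nonnegativity (together with $v(0)=0$) to bound the downward movement by the upward one. You have simply made the paper's informal two-line argument rigorous by writing out the partition sums explicitly.
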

\begin{proof}
Note that $v$ can increase by at most $t$ on the interval $[0,t]$. This, in conjunction with the nonnnegativity of $v$, then yields that $v$ can drop by at most $t$ as well, and hence the bound of $2t$.  
\end{proof}

Recall that we have fixed a strong Markov process $X$ with state space $E$ and a starting point $\xi \in E$ for which the following results are formulated.

\begin{proposition}  \label{P:2}
	Let $v: \overline E\rightarrow \R$ be a measurable function and assume that the Markov process $X$ is also a continuous $\P_\xi$--local martingale.
	Then the function $[0,\infty) \ni t \mapsto v( X_t)$ is of finite first variation on compact subintervals of $[0,\infty)$,  $\P_\xi$--almost surely, if and only if $v$ is constant on $E$. 
\end{proposition}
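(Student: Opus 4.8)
The easy direction is immediate: if $v$ is constant on $E$ with value $c$, then, since $X_t \in E$ for all $t < \bm\zeta$ while $X$ is constant on $[\bm\zeta,\infty)$, the map $t \mapsto v(X_t)$ equals $c$ on $[0,\bm\zeta)$ and is constant afterwards; it therefore has at most one jump (at $\bm\zeta$) and is of finite variation on every compact interval. The substance is the converse, which I would prove by contraposition: assuming $v$ is \emph{not} constant on $E$, I would exhibit a compact time-interval on which $t\mapsto v(X_t)$ has infinite variation with positive $\P_\xi$-probability, contradicting the hypothesis that it is of finite variation $\P_\xi$-almost surely.

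First I would remove the time change. Since $X$ is a continuous $\P_\xi$--local martingale, Lemma~\ref{L:1} gives that $[X]$ is almost surely strictly increasing on $[0,\bm\zeta)$, and by the Dambis--Dubins--Schwarz theorem there is, on a possibly enlarged space, a Brownian motion $W$ started at $\xi$ with $X_t = W_{[X]_t}$ for $t<\bm\zeta$. Because $[X]$ is continuous and strictly increasing, $t \mapsto [X]_t$ is a homeomorphism onto its range, and the total variation of a path is invariant under such a reparametrisation; hence $t\mapsto v(X_t)$ is of finite variation on compact subintervals of $[0,\bm\zeta)$ if and only if $u\mapsto v(W_u)$ is of finite variation on compact subintervals of $[0,[X]_{\bm\zeta-})$. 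Since $X$ is regular, every point of $E$ is reached from $\xi$ with positive probability, so it suffices to show that, whenever $a,b\in E$ satisfy $v(a)\neq v(b)$, the path $u\mapsto v(W_u)$ has infinite variation on a compact time-interval with positive probability.

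I would then split according to whether $v$ is Lebesgue-a.e.\ constant. In the first regime, $v$ fails to be a.e.\ constant on some open subinterval $I\subset E$; then for suitable $c$ and $\eta>0$ the disjoint level sets $A=\{v\le c-\eta\}\cap I$ and $B=\{v\ge c+\eta\}\cap I$ both have positive Lebesgue measure. Using the occupation--times formula $\int_0^u f(W_s)\,\d s = \int_{\R} f(y)\,L_u^y\,\d y$ together with the a.s.\ positivity of Brownian local time $L^y$ once $W$ has spread over $I$, the path spends positive time in both $A$ and $B$; combined with the fact that a continuous path must traverse every intermediate level each time it passes between $A$ and $B$, I would produce, on arbitrarily fine space-scales, infinitely many alternations of $v(W)$ between values $\le c-\eta$ and $\ge c+\eta$ within a fixed compact time-interval, whence infinite variation. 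In the second regime $v$ equals a constant $c$ Lebesgue-a.e.\ but $v(p)\neq c$ at some $p\in E$; here I would use that, on the positive-probability event that $W$ hits $p$, the visit set $\{u:W_u=p\}$ is a.s.\ a perfect Lebesgue-null set, so every neighbourhood of a visit time contains both visit times, where $v(W)=v(p)$, and times at which $v(W)=c$. This again forces infinitely many oscillations of size $|v(p)-c|$, hence infinite variation.

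The main obstacle is exactly this last step in each regime: converting the soft information ``$W$ spends positive time in two separated level sets'', respectively ``the visit set is perfect'', into the hard combinatorial statement that there are infinitely many oscillations of a \emph{fixed} size within a single compact time-interval. Making this precise requires a careful use of path continuity together with the strong Markov property and regularity of $X$, which guarantee immediate returns so that the oscillations accumulate rather than remaining isolated, as well as some care with the purely measurable nature of $v$, for which only occupation-time rather than pointwise information is available away from the exceptional points.
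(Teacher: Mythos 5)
Your easy direction, your Dambis--Dubins--Schwarz reduction (which matches the paper's own preparation), and your second regime --- where $v$ equals a constant $c$ Lebesgue-a.e.\ but $v(p)\neq c$ for some $p\in E$ --- are all sound; in that regime the perfectness of the visit set $\{u: W_u=p\}$ together with the Lebesgue-nullity of $\{u: v(W_u)\neq c\}$ does produce infinitely many oscillations of the \emph{fixed} size $|v(p)-c|$. The genuine gap is your first regime, and it is the main case: it contains every non-constant continuous $v$, in particular the function $u_\xi$ to which the paper applies Proposition~\ref{P:2}. There, the statement you aim to prove --- infinitely many alternations of $v(W)$ between values $\leq c-\eta$ and $\geq c+\eta$ within one compact time interval, for a fixed $\eta>0$ --- is false in general, so no amount of ``careful use of path continuity and the strong Markov property'' can establish it. Take $v(x)=x$: then $A=\{v\leq c-\eta\}$ and $B=\{v\geq c+\eta\}$ are separated by a spatial gap of width $2\eta$, so by uniform continuity of the path each $A$-to-$B$ alternation consumes a time bounded away from zero, and a compact interval contains only finitely many of them; the same holds for any monotone or Lipschitz $v$. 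For such $v$ the infinite variation of $v(W)$ emerges only by \emph{aggregating} oscillations across infinitely many spatial scales and locations (for $v(x)=x$ the number of size-$\eta$ oscillations on $[0,T]$ grows like $T/\eta^2$, so $\eta N_\eta\to\infty$ only as $\eta\downarrow 0$), and your sketch contains no mechanism for this aggregation --- indeed you flag exactly this step as an unresolved ``obstacle''. A second unaddressed difficulty: for merely measurable $v$, the occupation-time data you invoke (local times, time spent in $A$ and $B$) sees $v$ only up to Lebesgue-null sets, whereas alternation counts of $v(W)$ depend on pointwise values of $v$.

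These two difficulties are precisely what the paper's arguments are built to overcome. Proof~I handles general measurable $v$ by first using the semimartingale-function theory of \citet{Cinlar1980} to upgrade ``$v(B)$ has finite variation'' into ``$v$ is locally a difference of convex functions'', and then applying the It\^o--Meyer--Tanaka formula: finite variation of $v(B^{\bm\rho_n})$ forces the martingale part $\int_0^{\cdot\wedge\bm\rho_n} D^-v(B_t)\,\d B_t$ to vanish, hence $D^-v=0$. Proof~II is the rigorous version of your multi-scale idea: the variation dominates $\sum_i |v(i\delta+\delta)-v(i\delta)|\,N(i\delta,i\delta+\delta)$, and the uniform convergence $\delta N(x,x+\delta)\to L_{\bm\rho}(x)$ allows $\delta\downarrow 0$, yielding $\int_E L_{\bm\rho}(x)\,|\d v(x)|=0$; note that this route requires the \emph{additional} hypothesis that $v$ itself has finite variation, so that $|\d v|$ is a measure --- exactly the pointwise regularity your occupation-time approach cannot supply. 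To complete your proposal you would, in effect, have to reconstruct one of these two arguments.
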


 Since Proposition~\ref{P:2} is the core step of this note's argument we provide three different proofs.   
 
 \bigskip
 
 \emph{Preparation for the proofs of Proposition~\ref{P:2}.}
Clearly, $v$ being constant on $E$ implies that  $v(X_\cdot)$ is of finite first variation; thus it suffices to argue the reverse direction.  
Hence, from now on, we will assume that  $v(X_\cdot)$ is of finite first variation on compact subintervals of $[0,\infty)$.
Note that $v(X_\cdot)$ is of finite first variation variation on $\{\bm \zeta<\infty\}$.  If $\P_\xi(\bm \zeta = \infty) >0$  let  $(a_n)_{n \in \N}$ be a strictly decreasing sequence and $(b_n)_{n \in \N}$ a strictly increasing  sequence such that $E = \bigcup_{n \in \N} (a_n, b_n)$ and $\xi \in (a_1,b_1)$. 
Moreover, let 
\begin{align*}
	\bm \zeta_n = \inf \{t \geq 0: X_t \notin (a_n, b_n)\}, \qquad n \in \N.
\end{align*}
Then we have $\bm \zeta_n < \infty$ and $v(X_\cdot^{\bm \zeta_n})$ is of finite first variation for each $n \in \N$.  Moreover, note that $v$ is constant on $E$ if and only if $v$ is constant on $(a_n, b_n)$ for each $n \in \N$. Thus, we shall assume, without loss of generality, that $v(X_\cdot)$ is of finite first variation.

Next, observe that the Dambis-Dumbins-Schwarz theorem  yields the existence of a Brownian motion $B = (B_t)_{t \geq 0}$ with $B_0 = \xi$, possibly on an extension of the probability space, such that $X = B_{[X]}$; see, for instance, Theorem~V.1.7 in \citet{RY}.   
Then, with $\bm \rho = [X]_{\zeta}$, the process
$v(B_\cdot^{\bm \rho})$ is of finite first variation. 

 \bigskip

The first proof relies on an application of the  It\^o-Meyer-Tanaka formula.
\begin{proof}[Proof~I of Proposition~\ref{P:2}]
Proceeding as in Section~5 in \citet{Cinlar1980} we observe that $v$ is a so called semimartingale function for a Brownian motion killed when hitting the boundary of $E$ and thus, $v$ is locally the difference of two convex functions.  More precisely,  
with  $(a_n)_{n \in \N}$ and $(b_n)_{n \in \N}$ as above, $v|_{[a_n, b_n]}$ is the difference of two convex functions.  It then suffices to prove that $D^- v|_{[a_n, b_n]} = 0$, where  $D^- v|_{[a_n, b_n]}$ denotes its left derivative, for each $n \in \N$.   To this end, 
   let 
\begin{align*}
	\bm \rho_n = \inf \{t \geq 0: B_t \notin (a_n, b_n)\}, \qquad n \in \N.
\end{align*}
Then the It\^o-Meyer-Tanaka formula  yields  $$v(  B_{\cdot}^ {\bm \rho_n} ) = v(\xi) + \int_0^{\cdot \wedge {\bm \rho_n} } D^- v|_{[a_n, b_n]}(B_t) \mathrm d B_t + A_{\cdot}^{\bm \rho_n}, \qquad n \in \N,$$
 where $A = (A_t)_{t \geq 0}$ is a process of finite first variation.  Since  $v( B_{\cdot}^ {\bm \rho_n} )$ is of finite first variation we obtain   $ \int_0^{\cdot \wedge {\bm \rho_n} } (D^- v|_{[a_n, b_n]}(B_t))^2 \mathrm d t = 0$, and thus $D^- v|_{[a_n, b_n]} = 0$ for each $n \in \N$, as desired.
\end{proof}

We remark that \citet{Aboulaich:Stricker} provide a similar proof.
The next proof has been suggested by Vilmos Prokaj, to whom we are very grateful. The proof requires the additional assumption that $v$ is of finite first variation and uses local time of Brownian motion.
\begin{proof}[Proof~II of Proposition~\ref{P:2}]
 Let  $N(x,y)$ denote the number of upcrossings of $[x,y]$ made by $B^{\bm \rho} $ for all $x,y \in \R$ with $x<y$.   Moreover, let $L_{\bm \rho}(x)$ denote the local time of $B^{\bm \rho}$ at $x \in \overline E$, fix $\varepsilon > 0$, and pick some  sufficiently small $\delta>0$, possibly depending on $\omega \in \Omega$,  such that
\begin{align*}
|\delta N(x, x+\delta) - L_{\bm \rho} (x)| \leq \varepsilon
\end{align*}
for all $x \in \overline E$.  Such a $\delta$ exists almost surely, thanks to the uniform convergence of Theorem~2 in \citet{Chacon:1981}.   Next, define the sequence $({\bm \sigma}_k)_{k \in \N_0}$ of stopping times inductively by  ${\bm \sigma}_0=0$ and $${\bm \sigma} _{k+1} = {\bm \rho} \wedge \inf\{ t > {\bm \sigma}_k : |B_t - B_{{\bm \sigma}_k}|=\delta \}.$$

Suppose that the first variation $\Xi$ of the  function $v(B_{\cdot}^{\bm \rho} )$    is finite almost surely, directly implying that $v$ is continuous on $E$.
Then we have
\begin{align*}
	\delta\, \Xi &\geq \delta \sum_{k \in \N} |v(B_{{\bm \sigma}_{k+1} \wedge {\bm \rho} }) - v(B_{{\bm \sigma}_k \wedge {\bm \rho} })| \geq \sum_{i \in \Z, (i \delta, i\delta +\delta) \subset E} |v( i\delta +\delta) - v(i\delta)| \delta N(i\delta , i\delta +\delta)\\
		&\geq \sum_{i \in \Z,  (i \delta, i\delta +\delta) \subset E} |v( i\delta +\delta) - v(i \delta)|  (L_{\bm \rho}(i \delta) - \varepsilon).
\end{align*}
Letting now $\delta$ tend to zero and using the continuity of $L_{\bm \rho}$, argued in Theorem~VI.1.7 in \citet{RY}, note that
\begin{align*}
	0 = \lim_{\delta \downarrow 0} \delta\, \Xi  \geq   \int_E L_{\bm \rho} (x) |\d v(x)| - \varepsilon  
	\,\textrm{TV}(v),
\end{align*}
where $\textrm{TV}(v)$ denotes the variation of $v$, which is finite by assumption.
Next, letting $\varepsilon$ tend to zero, taking expectations, and using Tonelli yields
$$\int_E \E_\xi[L_{\bm \rho} (x)] |\d v(x)| =0.$$
Since each expectation is strictly positive, we obtain that the function $v$ is constant on $E$.
\end{proof}

The third  proof follows a pathwise argument and relies less on the one-dimensional character of $X$.  The proof requires the additional assumption that $v(\xi) = 0$, $v$ is nonnegative, and there exists a $\P_\xi$--nullset $N$ such that for all $s,t \geq 0$ and $\omega \in \Omega \setminus N$ we have the upper-Lipschitz condition
	\begin{align}  \label{eq:190815}
		v\left(X_{t+s}(\omega)\right) - v\left(X_t(\omega)\right) \leq s.
	\end{align}
\begin{proof}[Proof~III of Proposition~\ref{P:2}]
Again, clearly $v$ is continuous on $E$. 
	Fix now some $\omega \in \Omega$ such that the function $f: [0,\bm \zeta(\omega)) \rightarrow \R$, $t \mapsto v(X_t(\omega))$ is of finite first variation, \eqref{eq:190815} holds, and $X(\omega)$ has no point of monotonicity (see Theorem~2.9.13 in \citet{KS1}). Then $f$ is continuous and  Theorem~3.23(b) in \citet{Folland}  yields that $f$ has a derivative $f'$ almost everywhere.  Levy's decomposition theorem, Hahn's decomposition theorem,  and Proposition~3.30 in \citet{Folland} yield the existence of two nonnegative measures $\mu_-$ and $\mu_+$, both singular with respect to each other and to Lebesgue measure, such that 
	\begin{align*}
		\mathrm d f = f' \mathrm d t - \mathrm d \mu_- + \mathrm d \mu_+.
	\end{align*}
	
Suppose now that $f'(t)>0$ for some $t > 0$. Then we must have
$f(t+h)-f(t)>0$ for all sufficiently small $h \in \R$, but then $t$ is a point of monotonicity of $X(\omega)$. This contradicts the choice of $\omega$. Thus $f' \leq 0$ and we get in the same way that $f' = 0$.  Therefore, 	$\mathrm d f = - \mathrm d \mu_- + \mathrm d \mu_+$ 
Since, on intervals, we have
$\mathrm d f \leq \mathrm d t$ thanks to the upper-Lipschitz condition we get  $\mu_+ \leq m + \mu_-$, where $m$ denotes the Lebesgue measure.   Thanks to a monotone class argument we also get $\mu_+(D) \leq m(D) + \mu_-(D)$ for all $D \in \mathcal B$, the Borel sigma algebra of $[0,\infty)$.  Thus, $\mu_+$ is both absolutely continuous  and singular with respect to $m + \mu_-$, and we get $\mu_+ = 0$.
Finally, since $f\geq 0$ and $f(0) = 0$, we have $\mu_-=0$, and so $f$ is constant.
\end{proof}

\begin{lemma}  \label{L:1}
	 Assume that the Markov process $X$ is also a continuous $\P_\xi$--local martingale. Then the quadratic variation process $[X]$ is $\P_\xi$--almost surely strictly increasing on $[ 0 , \bm \zeta)$.
\end{lemma}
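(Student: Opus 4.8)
The plan is to recast the claim as the absence of intervals of constancy of $X$ inside $[0,\bm\zeta)$. Since $X$ is a continuous $\P_\xi$--local martingale, a standard property of such processes (see, e.g., \citet{RY}) gives, $\P_\xi$--almost surely, that $[X]_s=[X]_t$ holds for $0\le s<t$ if and only if $X$ is constant on $[s,t]$. As $[X]$ is nondecreasing, its failure to be strictly increasing on $[0,\bm\zeta)$ is exactly the existence of $0\le s<t<\bm\zeta$ with $X$ constant on $[s,t]$, so it suffices to exclude such flat pieces.

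The crux, which I expect to be the main obstacle, is a purely local statement: for every $y\in E$ the process leaves $y$ immediately, i.e., $R:=\inf\{t>0:X_t\neq y\}$ satisfies $\P_y(R=0)=1$. Note this uses only the strong Markov property, continuity, and regularity, not the martingale hypothesis. Since $\{R=0\}\in\sigalgebra{F}_{0+}$, Blumenthal's zero--one law (see, e.g., \citet{KS1}) gives $\P_y(R=0)\in\{0,1\}$. If $\P_y(R=0)=0$, then $R>0$ $\P_y$--almost surely; on $\{R<\infty\}$ continuity forces $X_R=y$, while the definition of $R$ as an infimum together with right--continuity yields times arbitrarily close to $R$ from above with $X\neq y$, so the fresh copy leaves $y$ immediately. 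The strong Markov property at $R$, comparing this leaving time with $\P_y(R=0)=0$, forces $\P_y(R<\infty)=0$; but then $y$ would be absorbing, contradicting the regularity of $X$. Therefore $\P_y(R=0)=1$.

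Finally, I would globalize by a countable union over the rationals. For each rational $q\ge0$ put $R_q:=\inf\{t\ge q:X_t\neq X_q\}$; on $\{q<\bm\zeta\}$ we have $X_q\in E$, so the Markov property at the deterministic time $q$ gives $\P_\xi(R_q>q\mid\sigalgebra{F}_q)=\P_{X_q}(R>0)=0$, and hence $\P_\xi(\{R_q>q\}\cap\{q<\bm\zeta\})=0$. If $X$ were constant on some $[s,t]$ with $0\le s<t<\bm\zeta$, then any rational $q\in(s,t)$ would satisfy $q<\bm\zeta$ and $R_q\ge t>q$. Thus the event that $[X]$ fails to be strictly increasing on $[0,\bm\zeta)$ is contained in $\bigcup_{q\in\Q}(\{R_q>q\}\cap\{q<\bm\zeta\})$, a countable union of $\P_\xi$--null sets and hence $\P_\xi$--null, which is the assertion.
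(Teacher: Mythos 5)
Your proof is correct and takes essentially the same route as the paper: the paper's two-citation proof invokes Proposition~III.3.13 of Revuz--Yor (and the discussion around it) for precisely the fact you establish by hand --- namely that, via the Blumenthal zero--one law, the strong Markov property, continuity, and regularity, $X$ has no intervals of constancy --- and Proposition~IV.1.13 there for your first step, that the intervals of constancy of $X$ and of $[X]$ coincide for a continuous local martingale. The only difference is that you give a self-contained argument where the paper cites, which is a virtue rather than a gap.
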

\begin{proof}
	Proposition~III.3.13  and the discussion proceeding it in \citet{RY} yield that $X$ cannot be constant on an interval. Proposition~IV.1.13 in \citet{RY} then yields the statement.
\end{proof}

Before stating the next lemma we introduce some notation. Assume that $E$ is of the form $E = (a,b)$ for $a,b \in \overline \R$ with $a<b$. For each $x \in \overline E$ we now define the deterministic function $u_x : \overline E\mapsto [0,1]$ by 
\begin{align}  \label{eq:u}
 	u_x(y)= 1 \wedge \inf \{ t \geq 0: \P_x(\bm \tau_y \leq t) >0 \}, \,\,\, y \in  E; \quad u_x(a) = \lim_{y \searrow a} u_x(y); \quad u_x(b) = \lim_{y \nearrow b} u_x(y).
\end{align}
Note that  $u_x$ is nonincreasing before $x$ and nondecreasing after $x$; thus, in particular, the limits in \eqref{eq:u} always exist, for each $x \in \overline E$. Moreover, $u_x$ is
nonnegative, 
 of finite first variation, and satisfies $u_x(x) = 0$, for each $x \in  E$.   Observe that an equivalent formulation of Theorem~\ref{T:1} is the statement that $u_\xi$ is constant.

\begin{lemma} \label{L2} Assume that the Markov process $X$ is also a continuous $\P_\xi$--local martingale. 
The  function $u_\xi$, given in \eqref{eq:u}, satisfies the following two claims. 
\begin{enumerate}[label={\rm(\roman{*})}, ref={\rm(\roman{*})}]
	\item\label{T:1:P:C1} $u_\xi$ is continuous;
	\item\label{T:1:P:C2}  there exists a $\P_\xi$--nullset $N$ such that for all $s,t \geq 0$ and $\omega \in \Omega \setminus N$ we have
	\begin{align*}
		u_\xi(X_{t+s}(\omega)) - u_\xi(X_t(\omega)) \leq s.
	\end{align*}
	\end{enumerate}
\end{lemma}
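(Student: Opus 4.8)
Write $\psi(x,y)=\inf\{t\ge 0:\P_x(\bm\tau_y\le t)>0\}$, so that $u_x=1\wedge\psi(x,\cdot)$; recall that $u_\xi$ is nonnegative, vanishes at $\xi$, and is nonincreasing before $\xi$ and nondecreasing after it. Setting $g(t)=u_\xi(X_t)-t$, assertion \ref{T:1:P:C2} is precisely the statement that $g$ is nonincreasing off a single nullset, and the plan is to prove \ref{T:1:P:C2} in this form and then read off \ref{T:1:P:C1}. Three elementary facts about $\psi$ will be used. Concatenating near-optimal passages via the strong Markov property at $\bm\tau_y$ gives the subadditivity $\psi(x,w)\le\psi(x,y)+\psi(y,w)$, with equality $\psi(\xi,w)=\psi(\xi,w_0)+\psi(w_0,w)$ when $w_0$ lies between $\xi$ and $w$. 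Moreover, since $X$ is a continuous local martingale with strictly increasing quadratic variation (Lemma~\ref{L:1}), it cannot stay below or above a level on a time interval; hence from any point it reaches points immediately above and below in arbitrarily small time, $\psi(w_0,w_0^+)=\psi(w_0,w_0^-)=0$. Together with the additivity this shows that $u_\xi$ is continuous from the side away from $\xi$ at every point.

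I would first record two consequences. Because $u_\xi$ is monotone on each side of $\xi$ and continuous from the outer side, $\sup_{|w-w_0|\le\delta}u_\xi(w)\to u_\xi(w_0)$ as $\delta\downarrow 0$ for every $w_0$; as $X$ is continuous this yields $\limsup_{t\to t_0}u_\xi(X_t)\le u_\xi(X_{t_0})$, i.e.\ $g$ is upper semicontinuous. Second, I claim that for each fixed $t$, off a nullset, $g(T)\le g(t)$ for all $T\ge t$. By subadditivity and $1\wedge a-1\wedge b\le(a-b)^+$ this reduces, via the Markov property at $t$, to showing that under $\P_y$ one has $\psi(y,X_s)\le s$ for all $s\ge 0$ simultaneously, almost surely, for every $y$. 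This is the conceptual core. For a \emph{fixed} $s$ it says the law of $X_s$ is carried by $\{w:\psi(y,w)\le s\}$: with $r_s=\sup\{w\ge y:\psi(y,w)\le s\}$, if $\P_y(X_s>r_s)>0$ then $\P_y(X_s\ge r')>0$ for some $r'>r_s$, and since $X$ starts at $y<r'$ and is continuous, $\{X_s\ge r'\}\subseteq\{\bm\tau_{r'}\le s\}$, forcing $\psi(y,r')\le s$ and contradicting $r'>r_s$ (the lower end is symmetric). As $s\mapsto r_s$ is right-continuous (because $\bigcap_{s'>s}\{w:\psi(y,w)\le s'\}=\{w:\psi(y,w)\le s\}$), the inclusions valid at rational $s$ propagate by continuity of $X$ to all $s$, giving the claim.

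The step I expect to be the main obstacle is upgrading this family of ``fixed-time'' monotonicities into a \emph{single} nullset on which $g$ is nonincreasing, since the Markov property yields one nullset per $t$. Here I would combine the upper semicontinuity of $g$ with a Tonelli argument. Applied to the previous claim, this shows that almost surely Lebesgue-almost-every $t$ is \emph{good}, meaning $g(T)\le g(t)$ for all $T\ge t$; in particular the good times are dense. Fix any $t_0$ and $T>t_0$ and choose good $t_n\downarrow t_0$ with $t_n<T$; then $g(T)\le g(t_n)$ for every $n$, so $g(T)\le\limsup_{n}g(t_n)\le u_\xi(X_{t_0})-t_0=g(t_0)$ by upper semicontinuity. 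Thus $g$ is nonincreasing for all $t_0$ simultaneously, which is \ref{T:1:P:C2}.

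Finally, \ref{T:1:P:C1} follows. Outer-side continuity of $u_\xi$ was already established above, so it remains to prove continuity from the side toward $\xi$; take $w_0>\xi$, the case $w_0<\xi$ being symmetric and $w_0=\xi$ being covered by the outer estimate. On the positive-probability event $\{\bm\tau_{w_0}<\infty\}$ put $t_*=\bm\tau_{w_0}$; since $X_0=\xi<w_0$ and $X$ is continuous, $X_t<w_0$ and $X_t\to w_0$ as $t\uparrow t_*$, so $u_\xi(X_t)\to u_\xi(w_0^-)$. Applying \ref{T:1:P:C2} with later time $t_*$ gives $u_\xi(w_0)-u_\xi(X_t)\le t_*-t\to 0$, whence $u_\xi(w_0^-)\ge u_\xi(w_0)$; together with monotonicity this yields $u_\xi(w_0^-)=u_\xi(w_0)$, so $u_\xi$ is continuous.
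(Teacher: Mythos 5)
Your architecture inverts the paper's, and most of it is sound. The paper proves claim (i) first, importing the inner-side (left-) continuity of $u_\xi$ from Section~3.3 of It\^o--McKean, respectively Lemma~4.1 of Karatzas and Ruf, and then deduces claim (ii) from an identity at rational time pairs together with the continuity of $u_\xi$ and of $X$. You go the other way: prove (ii) first, with upper semicontinuity of $g(t)=u_\xi(X_t)-t$, Tonelli, and density of good times standing in for continuity, and then recover the inner-side continuity in (i) from (ii) by following the path up to $\bm\tau_{w_0}$. That final derivation is correct and elegant, as are the subadditivity of $\psi$, the outer-side continuity, the upper semicontinuity of $g$, the fixed-time claim $\P_y(\psi(y,X_s)\le s)=1$, and the upgrade from ``almost every $t$ is good'' to ``every $t$ is good''. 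The gap sits exactly at the step you yourself call the conceptual core: passing from rational $s$ to all $s\ge 0$ in the claim that, under $\P_y$, $\psi(y,X_s)\le s$ holds for all $s$ simultaneously.

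Your fixed-time argument shows that the law of $X_s$ is carried by the interval $A_s=\{w:\psi(y,w)\le s\}$, but at this stage nothing guarantees that $A_s$ is closed: closedness of all the $A_s$ is equivalent to lower semicontinuity of $\psi(y,\cdot)$, i.e.\ to precisely the inner-side continuity the whole scheme is designed to avoid assuming, so it may not be used. Consequently, right-continuity of $s\mapsto r_s$ plus continuity of $X$ yields only $X_s\in\overline{A_s}=[l_s,r_s]$ for all $s$; it does not exclude that at some time $s$ one has $X_s=r_s$ while $\psi(y,r_s)>s$, which can happen whenever $\psi(y,\cdot)$ has a left jump at the level $r_s$ straddling $s$. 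At such an $s$ the desired inequality fails, and the Tonelli--density step cannot absorb this, because upper semicontinuity bounds $g$ at the approximating times $t_n$, never at the later time $T$. The gap is repairable, and the repair even makes the rational-time propagation unnecessary: argue over levels rather than times. For every fixed level $w$ one has $\P_y(\bm\tau_w<\psi(y,w))=0$ directly from the definition of the infimum, since $\{\bm\tau_w<\psi(y,w)\}=\bigcup_{q\in\Q,\,q<\psi(y,w)}\{\bm\tau_w\le q\}$ is a countable union of null events. Now if $\psi(y,X_s(\omega))>s$ for some $s$, put $w_0=X_s(\omega)$ (say $w_0>y$), so that $\bm\tau_{w_0}(\omega)\le s<\psi(y,w_0)$; either $w_0$ lies in the countable set $J$ of discontinuities of the monotone function $\psi(y,\cdot)$, or $\psi(y,\cdot)$ is left-continuous at $w_0$ and then some rational $w'\in(y,w_0)$ satisfies $\bm\tau_{w'}(\omega)\le\bm\tau_{w_0}(\omega)<\psi(y,w')$, because a continuous path from $y$ must cross $w'$ before $w_0$. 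Hence the bad event is contained in $\bigcup_{w\in\Q\cup J}\{\bm\tau_w<\psi(y,w)\}$, a countable union of $\P_y$-null sets. With this observation your core claim holds for all $s$ simultaneously, the relevant events become manifestly measurable (which you also need, but do not address, for the Tonelli step), and the rest of your proof goes through.
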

\begin{proof}
To start, for all $x, w \in \overline E$, we have the triangle inequality
\begin{align}   \label{eq:u:add}
	u_x(\cdot) \leq u_x(w) + u_w(\cdot).
\end{align}
Indeed,  this is clear if either one of the two summands equals one. To see the distributional property of \eqref{eq:u:add} otherwise, fix $x, w,y \in \overline E$ and assume for the moment that the underlying probability space is the canonical one; see Section~I.3 in \citet{RY}. 
Then, for each path $\omega$ we have
the weak inequality $\bm \tau_y(\omega) \leq \bm \tau_w(\omega)  + \bm \tau_y(\theta_{\bm \tau_w(\omega)}(\omega))$, where $\theta$ denotes the shift operator; that is $\theta_t(\omega)(\cdot) = \omega(t+\cdot)$ for all $t \geq 0$, see also the discussion on page~104 in \citet{RY}. Fix now $\varepsilon > 0$ and $t_1 = u_x(w) + \varepsilon/2$ and $t_2 = u_w(y)+ \varepsilon/2$. 
Then we have
\begin{align*}
	\P_x(\bm \tau_y \leq t_1 + t_2) &\geq \P_x(\bm \tau_w +  \bm \tau_y(\theta_{\bm \tau_w}) \leq t_1 + t_2)
		\geq  \P_x(\bm \tau_w \leq t_1;   \bm \tau_y(\theta_{\bm \tau_w}) \leq  t_2)\\
		&= \P_x(\bm \tau_w \leq t_1) \P_w( \bm \tau_y \leq  t_2) > 0,
\end{align*}
where the equality follows the strong Markov property of $X$ and the last inequality follows from the definition of $t_1$ and $t_2$.  This yields directly that $u_x(y) \leq t_1 + t_2 =  u_x(w) + u_w(y) + \varepsilon$. Letting $\varepsilon$ tend to zero then gives \eqref{eq:u:add}.

Claim~\ref{T:1:P:C1}:   
First, for any $w \in E$, the continuity of $u_w$ at $w$ follows from the fact that  $X$ is not constant on any interval (see the proof of Lemma~\ref{L:1}), in conjunction with the strong Markov property.   Let us now study the continuity of $u_\xi$ at some $y \in E$.  Without loss of generality, we may assume that $y>\xi$. The right-continuity then follows from \eqref{eq:u:add} and the continuity of $u_y$ at $y$.
For the left-continuity of $u_\xi$ at $y$, Section~3.3 in \citet{ItoMcKean} or Lemma~4.1, in particular (4.5), in \citet{Karatzas_Ruf_2013} also hold for the case of the regular, strong Markov process $X$, thanks to Lemma~\ref{L:1}. Thus,  for each $\varepsilon>0$ there exists $w \in  (\xi,y)$ such that $ \P_w({\bm \tau}_y \leq \varepsilon) >0$. 
The left-continuity of $u_\xi$ at $y$ then follows by another application of  \eqref{eq:u:add}.

Claim~\ref{T:1:P:C2}:  
Assume first that there exists some $t \geq 0$ such that $\P_{w}(u_w(X_{t}) >  t )>0$ for some $w \in  E$.   This then implies that there exists some $y \in E$ such that $u_w(y)>t$ and $\P_{w}(X_{t} >  y )>0$ if $y>\xi$ and $\P_{w}(X_{t} <  y )>0$ if $y<\xi$, respectively. This, in conjunction with the continuity of $X$, however, contradicts the definition of $u_w$ in \eqref{eq:u}. We therefore have
 \begin{align} \label{L2:eq1}
	 	\P_{w}\left(u_w(X_{t}) \leq  t \right)= 1 \qquad \text{for all  $t \geq 0$ and $w \in E$.}
\end{align}
Fix now $q_1, q_2 \in \Q$. Conditioning and the strong Markov property of $X$ then yield that 
 $\P_\xi(u_\xi(X_{q_1 + q_2}) - u_\xi(X_{q_1}) \leq q_2) = 1$  if  
 \begin{align}   \label{L2:eq2}
 	\left.\P_{w}\left(u_\xi(X_{q_2})  - u_\xi(w) \leq  q_2 \right)\right|_{w = X_{q_1}} =1 \text{ holds $\P_\xi$--almost surely}.
 \end{align}
We now note that \eqref{eq:u:add} and \eqref{L2:eq1} imply \eqref{L2:eq2}.
The claim then follows from the continuity of $u_\xi$ and $X$.  
\end{proof}

We are now ready to prove this note's main result.

\begin{proof}[Proof of Theorem~\ref{T:1}]
	Thanks to Propositions~VII.3.2, VII.3.4, and VII.3.5 in \citet{RY} we may assume,  without loss of generality,  that $X$ is in natural scale and thus a $\P_\xi$--local martingale.  
Next,  we recall the function $u_\xi$, given in \eqref{eq:u}. 
Now Lemma~\ref{L1}, in conjunction with Lemma~\ref{L2}\ref{T:1:P:C2},  yields that the function 
$[0,\infty) \ni t \mapsto u_\xi(X_t)$ 
has finite first variation on compact subintervals of $[0,\infty)$,  $\P_\xi$--almost surely.     Proposition~\ref{P:2}  now implies that $u_\xi$ is constant.  This yields that $u_\xi(z) = u_\xi(\xi) = 0$,  and thus, the assertion of the theorem follows.
\end{proof}

\bigskip

\section*{Acknowledgements}
We thank Stefan Ankirchner, Umut Cetin, Leif D\"oring, Ben Hambly,  Aleks Mijatovi\'c, Nicolas Perkowski, Vilmos Prokaj, Mykhaylo Shkolnikov, and Mikhail Urusov for many helpful discussions on the subject matter of this note.  We are grateful to Tomoyuki Ichiba and Ioannis Karatzas for a careful reading of an earlier version. J.R.~acknowledges generous support from the Oxford-Man Institute of Quantitative Finance, University of Oxford.

\bibliography{aa_bib}
\bibliographystyle{apalike}
\end{document}